\documentclass[12pt]{amsart}
\pdfoutput=1
\usepackage[english]{babel}
\usepackage[top=1.6in, bottom=1.6in, left=1.5in, right=1.5in]{geometry}
\usepackage{amsthm,amsmath,amssymb,amsfonts}
\usepackage[colorlinks=true,urlcolor=blue,linkcolor=blue,citecolor=blue]{hyperref}
\usepackage[alphabetic]{amsrefs}
\usepackage{mathtools} 
\usepackage{graphicx}
\usepackage{setspace}

\newtheorem{theorem}{Theorem}[section]
\newtheorem{lemma}[theorem]{Lemma}
\newtheorem{proposition}[theorem]{Proposition}
\newtheorem{corollary}[theorem]{Corollary}
\theoremstyle{remark}

\theoremstyle{definition}
\newtheorem{definition}[theorem]{Definition}


\hyphenation{FlatKnot-Info}
\usepackage{xcolor}
\newcommand{\Addresses}{{
  \bigskip
  \footnotesize
  \textsc{
Department of Mathematics \& Statistics, McMaster University\\
Hamilton, Ontario,
Canada L8S 4K1
  }\par\nopagebreak
      \textit{E-mail address}: \texttt{chenj293@mcmaster.ca}
  }}
\title{Connected sum and crossing numbers of flat virtual knots}
\author{Jie Chen}
\date{}
\subjclass[2020]{57K10; 57K12}
\keywords{flat virtual knot, connected sum, composite knot}
\begin{document}
\begin{abstract}
         The connected sum of two flat virtual knots depends on the choice of diagrams and basepoints. We show that any minimal crossing diagram of a composite flat virtual knot is a connected sum diagram. We also show the crossing number of flat virtual knots is super-additive under connected sum.
\end{abstract}

\maketitle
\section{Introduction}
A famous conjecture in classical knot theory asserts that the crossing number is additive under connected sum. It is known that the crossing number satisfies $cr(K_1\# K_2)\le cr(K_1)+cr(K_2)$, so it is sub-additive, and additivity has been established for certain families of knots (e.g.~alternating knots). In general, though, it remains a difficult open problem to prove additivity for all knots. 

In this paper, we consider the analogous question for \emph{flat virtual knots}. Flat virtual knots were first introduced by Kauffman \cite{kauffman99}, and they were also studied by Turaev  \cite{turaev04}, who called them \emph{virtual strings}. A flat virtual knot (or virtual string) can be represented as a regular immersed curve $\alpha: S^1 \to \Sigma$ in a compact oriented surface $\Sigma$, up to homotopy and stabilization \cite{Carter-Kamada-Saito}. The operation of connected sum is not well-defined; it depends on the choice of oriented diagrams and basepoints on the flat virtual knots. 

The main result in this paper is Theorem~\ref{composite2}; it shows that any minimal crossing diagram of a composite flat virtual knot is a connected sum diagram. By way of application, we use it to show that the crossing number is super-additive under connected sum of flat virtual knots. Namely, we show that $cr(\alpha_1\# \alpha_2)\ge cr(\alpha_1)+cr(\alpha_2)$ (Corollary \ref{cor:superadd}). This inequality holds for any composite flat virtual knot obtained as the connected sum of $\alpha_1$ and $\alpha_2$, regardless of the choice of diagrams and basepoints. There are examples to show the inequality is sometimes strict, but if the connected sum is formed using two minimal crossing diagrams, then Proposition~\ref{prop:min} applies to give an equality of crossing numbers.

Here is a brief overview of the rest of this paper. Section~\ref{sec-prelim} contains the basic definitions for flat virtual knots, flat Reidemeister moves, and Gauss diagrams. It also presents the monotonicity result (Theorem~\ref{thm:monotonicity}) for flat virtual knots. Section~\ref{section-crossing-number} reviews the construction of the connected sum for flat virtual knots. It also introduces long flat virtual knots and
concludes with statements and proofs of the main results (Theorem~\ref{composite2} and Corollary~\ref{cor:superadd}). 

\section{Preliminaries} \label{sec-prelim}
In this section, we introduce flat virtual knots and links. They can be described alternatively in terms of flat virtual link diagrams or Gauss diagrams.
 The flat Reidemeister moves 
generate an equivalence relation called homotopy. 
\begin{definition}
A \emph{flat virtual link diagram} is a $4$-valent embedded planar graph, 
where each vertex is either a \emph{flat crossing} or virtual crossing
as in Figure~\ref{fig:fcr}.
\end{definition}
\begin{figure}[ht]
    \centering
    \includegraphics[scale=1.4]{./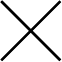} \qquad
    \includegraphics[scale=1.4]{./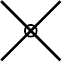}
    \caption[Flat and virtual crossings]{A flat crossing (left) and virtual crossing (right) }
    \label{fig:fcr}
\end{figure}

\begin{definition}
    Two flat virtual link diagrams are said to be \emph{homotopic} if they are related by a finite sequence of 
    \emph{flat Reidemeister moves} shown in Figure~\ref{fig:frmove}
    along with planar isotopies.
    \label{def:fr}
\end{definition}

For a flat virtual link diagram $D$, its \emph{crossing number} is denoted $cr(D)$ and defined as the number of flat crossings in $D$. Note that the virtual crossings of $D$ do not contribute to $cr(D).$ The crossing number of a flat virtual link $\alpha$ is denoted $cr(\alpha)$ and defined as the minimum, over all diagrams $D$ for $\alpha$, of the crossing number of $D$. Thus, $cr(\alpha)$ is an invariant of the flat virtual link. 

Notice that, for the various moves FR1--FR4 and VR1--VR3 in Figure~\ref{fig:frmove}; only two of them (FR1, FR2) alter the crossing number of the diagram. An FR1 or FR2 move is said to be \emph{increasing} if it increases the crossing number of the diagram, and it is said to be \emph{decreasing} if it decreases the crossing number of the diagram.

Flat virtual links arise naturally from virtual links by flattening all of the classical crossings \cite{kauffman99}. When we refer to a component of a flat virtual link, we mean a connected component of a virtual link overlying it. In this paper, we will be mainly interested in oriented flat virtual knots, which are oriented flat virtual links with one component.

\begin{figure}[hb]
    \centering
    \includegraphics[scale=1.00]{./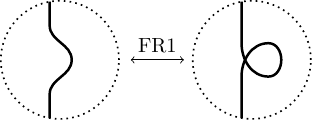}
    \hspace{20pt}
    \includegraphics[scale=1.00]{./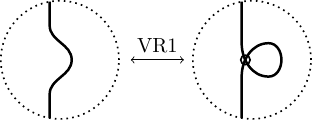}\\
    \vspace{12pt}
    \includegraphics[scale=1.00]{./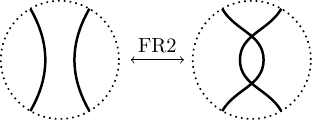}
    \hspace{20pt}
    \includegraphics[scale=1.00]{./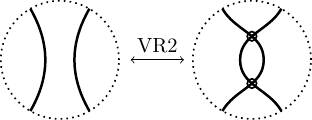}\\
    \vspace{12pt}
    \includegraphics[scale=1.00]{./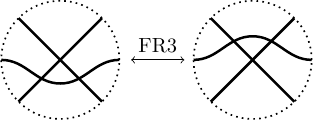}
    \hspace{20pt}
    \includegraphics[scale=1.00]{./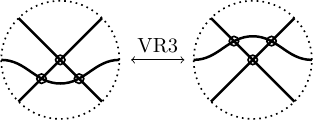}
    \vspace{12pt}
    
    \includegraphics[scale=0.95]{./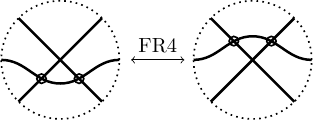}
    \caption[Flat Reidemeister moves]{Flat Reidemeister moves}
    \label{fig:frmove}
\end{figure}

Gauss initiated the study of planar curves and he developed a notation to encode them using double occurrence words which are now known as \emph{Gauss words}. A Gauss diagram is a graphical representation of a double occurrence word, and (signed) Gauss diagrams provide a useful  notation for immersed curves in surfaces, virtual strings, and virtual knots
\cites{Carter, turaev04, kauffman99}.

\begin{definition}
For an oriented flat virtual knot diagram with $n$ classical crossings, its \emph{Gauss diagram}
is a counterclockwise oriented circle with $2n$ points partitioned into $n$ ordered pairs. The pair of points is indicated by drawing an arrow from the first point to the second.
\end{definition}
The circle represents the knot, and each pair of points (or arrow) represents a crossing. The direction of the arrow is determined by orientations as shown in Figure~\ref{fig:fgauss2}. Specifically, the arrow points from the first arc to the second if and only if the tangents to the arcs form an oriented basis for the plane.
The positions of the arrows determine the order in which the flat crossings are traversed, 
and the virtual crossings are a by-product of drawing a non-planar graph in the plane. Their placement is arbitrary, which is a consequence of VR1, VR2, VR3, and FR4, and the Gauss diagram provides no information about the virtual crossings.

Figure~\ref{fig:gauss5} shows two flat virtual knots with their Gauss diagrams. On the left is the flat virtual trefoil, which is flat trivial. On the right is the flat virtual knot 3.1, which is nontrivial and the first nontrivial flat virtual knot in the tabulation \cite{flatknotinfo}.

\begin{figure}[ht]
    \centering
    \includegraphics[scale=1.00]{./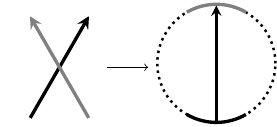}
   \caption{Flat crossings to Gauss diagram arrows}
    \label{fig:fgauss2}
\end{figure}

\begin{figure}[ht]
    \centering
    \includegraphics[scale=0.90]{./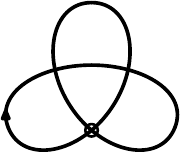}
    \quad\includegraphics[scale=0.70]{./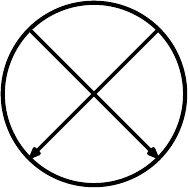}
    \qquad \qquad
    \includegraphics[scale=1.35]{./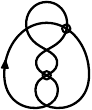}
    \quad\includegraphics[scale=0.70]{./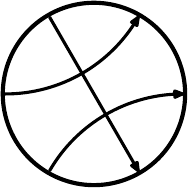}
   
    \caption{Two flat knots with Gauss diagrams}
    \label{fig:gauss5}
\end{figure}
Every oriented flat virtual knot diagrams determines an associated Gauss diagram, and conversely any Gauss diagram determines an oriented flat virtual knot diagram. The flat Reidemeister moves of Figure~\ref{fig:frmove} can be translated into moves on Gauss diagrams. Since the Gauss diagram records arcs with orientation, the translated moves FR1, FR2, and FR3 have several possible variants. Figure~\ref{fig:gfrmove12} shows the variant moves for FR1 and FR2, and Figure~\ref{fig:gfrmove3} shows the variant moves for FR3.
Note that VR1, VR2, VR3 and FR4 do not change the Gauss diagram. 
We continue to refer to variant FR1 and FR2 moves as being \emph{increasing} (or \emph{decreasing}) according to whether one or two arrows appear (or disappear) under its application.

For any classical knot, there is a sequence of Reidemeister moves to convert any given diagram into a minimal crossing diagram. However, the sequence may increase the number of crossings before achieving a minimal crossing diagram; see \cite{Henrich-Kauffman} for examples of unknot diagrams exhibiting this phenomenon.

For flat virtual knots, the following result shows that one can convert any flat virtual knot diagram into a minimal crossing diagram without introducing new crossings, i.e., without using increasing FR1 or FR2 moves. The following theorem is a statement of the monotonicity result for flat virtual knots, \cites{hassscott,manturovbook, cahn,david}.

\begin{theorem}
    For any flat virtual knot diagram, there exists a sequence of flat Reidemeister moves such that the number of crossings monotonically decreases until one achieves a minimal crossing diagram.
    Further, any two minimal crossing diagrams of the same flat virtual knot are related by a sequence of FR3 moves.
    \label{thm:monotonicity}
\end{theorem}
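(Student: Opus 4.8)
The plan is to pass from the diagrammatic setting to immersed curves on surfaces, where the statement is a classical fact. Via the surface realization of flat virtual knots \cite{Carter-Kamada-Saito}, a diagram $D$ determines a generic immersed closed curve $\alpha\colon S^1\to\Sigma$ on a compact oriented surface, with the flat crossings of $D$ in bijection with the transverse double points of $\alpha$; two diagrams are homotopic exactly when the associated curves are related by homotopy together with stabilization of $\Sigma$. Under this dictionary the moves FR1, FR2, FR3 correspond to the three generic local homotopy moves of immersed curves, namely the monogon ($1$-gon), bigon ($2$-gon), and triangle moves, while VR1, VR2, VR3, and FR4 only record the arbitrary placement of virtual crossings and carry no curve-theoretic content. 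Consequently $cr(D)$ equals the number of double points of $\alpha$, and $cr(\alpha)$ equals the minimal self-intersection number attained in the homotopy class, minimized over stabilizations.

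First I would establish the monotone reduction. The key geometric input is the characterization of minimal position: a generic curve on a surface realizes the minimal self-intersection number in its homotopy class if and only if its image bounds no singular monogon and no singular bigon \cite{hassscott}. If $\alpha$ is not in minimal position, then it bounds such a $1$-gon or $2$-gon; choosing an innermost one, its interior meets $\alpha$ only in properly embedded arcs, which can be pushed across by FR3 moves until the $1$-gon (resp.\ $2$-gon) is embedded and removable by a single decreasing FR1 (resp.\ FR2) move. This strictly lowers the crossing count, and iterating terminates at a minimal crossing diagram with the number of crossings decreasing monotonically throughout. Equivalently one may invoke the curve-shortening argument of \cite{hassscott}, which produces a homotopy to minimal position along which the number of double points never increases; the combinatorial reproofs of \cites{manturovbook, cahn, david} yield the same conclusion directly on diagrams.

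For the second assertion I would argue that minimal representatives are unique up to triangle moves. Given two minimal crossing diagrams of the same flat virtual knot, the corresponding curves $\alpha_0$ and $\alpha_1$ are homotopic and both in minimal position. A generic homotopy between them factors as a finite sequence of monogon, bigon, and triangle moves. Because both endpoints already attain the minimal self-intersection number, one can arrange, again using the minimal-position theory of \cite{hassscott}, that the homotopy remains in minimal position throughout: any monogon or bigon move would force the crossing number to drop below the minimum and later return, and such a dip can be cancelled. The only moves that survive are triangle moves, so translating back, the two minimal crossing diagrams are related by a sequence of FR3 moves.

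The main obstacle is the existence of a reducing move for a non-minimal curve: one must guarantee an innermost $1$-gon or $2$-gon that can be eliminated by an FR1 or FR2 move without first passing through an increasing move, and one must ensure the reducing homotopy is realizable by flat Reidemeister moves on diagrams rather than by an arbitrary ambient homotopy of $\Sigma$. This is exactly the content of the Hass--Scott minimal-position theorem, whose proof, via either curve shortening or combinatorial surgery, is the technical heart of the argument. The remaining work is bookkeeping in the stabilized virtual setting, verifying that surface homotopies descend to flat Reidemeister moves and that the virtual crossings, governed by VR1--VR3 and FR4, never obstruct the reductions.
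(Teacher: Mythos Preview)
The paper does not give its own proof of this theorem: it is stated as a background result and attributed to \cites{hassscott, manturovbook, cahn, david}, with the surrounding paragraph only recounting the history (Hass--Scott for curves on a fixed surface, Kadokami's claim, Gibson's link counterexamples, and the corrected proofs by Cahn and Freund). So there is nothing in the paper to compare your argument against line by line.

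That said, your sketch is broadly in the spirit of the cited sources, and your identification of Hass--Scott minimal-position theory as the technical core is correct. Two places where your outline is thinner than what the literature actually does: first, the ``innermost $1$-gon or $2$-gon'' reduction is not as straightforward as you suggest, and Hass--Scott's original argument goes through curve shortening rather than a purely combinatorial innermost-disk argument; the direct diagrammatic versions you cite \cites{cahn, david} are what make this rigorous in the flat virtual setting. Second, for the uniqueness-up-to-FR3 part, the assertion that a dip in crossing number along a generic homotopy ``can be cancelled'' is exactly the nontrivial content and is not a formality; this is where the cited papers do real work. You also pass somewhat quickly over stabilization: flat virtual equivalence allows changing the genus of $\Sigma$, so one must check either that both minimal diagrams can be realized on a common surface and connected by a homotopy there, or (as in \cites{cahn, david}) work entirely with Gauss diagrams so that the surface never enters. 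None of this is wrong in outline, but a self-contained proof would need to fill these steps in rather than gesture at them.
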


The monotonicity result for flat virtual knots and links has a somewhat storied history. In \cite{hassscott}, Hass and Scott showed that any curve on a surface can be monotonically reduced to one with minimal intersection number. In \cite{MR2008880}, Kadokami claimed that any flat virtual link diagram can be monotonically reduced to a minimal crossing diagram, but Gibson found counterexamples in \cite{gibson}.  In \cite{cahn}, Cahn proved Kadokami's claim for flat virtual knots, and in \cite{david}, Freund gave a complete solution by proving Kadokami's claim for non-parallel flat virtual links.

\begin{figure}[ht!]
    \centering
    \includegraphics[scale=0.95]{./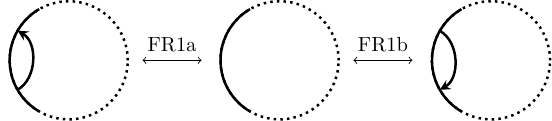}
    \\ \vspace{4pt}
    \includegraphics[scale=0.95]{./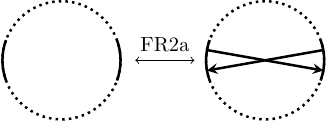}
    \\ \vspace{4pt}
    \includegraphics[scale=0.95]{./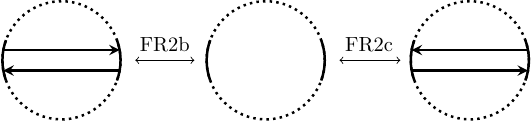}
    \caption{The variant FR1 and FR2 moves on Gauss diagrams}
    \label{fig:gfrmove12}
\end{figure}

\begin{figure}[ht!]
    \centering
    \includegraphics[scale=0.95]{./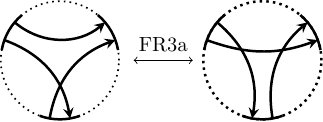}
    \hspace{15pt}
    \includegraphics[scale=0.95]{./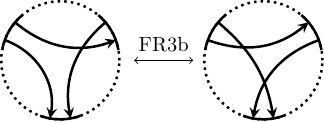}\\
    \vspace{6pt}
    \includegraphics[scale=0.95]{./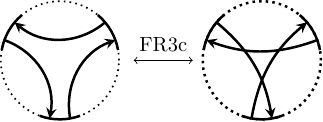}
    \hspace{15pt}
    \includegraphics[scale=0.95]{./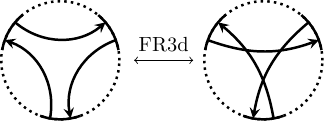}
   \caption{The variant FR3 moves on Gauss diagrams}
    \label{fig:gfrmove3}
\end{figure}

\section{Composite flat virtual knots and crossing number} \label{section-crossing-number}
In this section, we define the connected sum for oriented flat virtual knots and consider the crossing number of composite flat virtual knots. 
We will show that every minimal crossing diagram of a composite flat virtual knot is a connected sum diagram.

We use  $D^\bullet$ to denote an oriented flat virtual knot diagram $D$ together with a choice of basepoint $p$ on it.
Given two oriented diagrams $D_1, D_2$ of flat virtual knots with basepoints $p_1,p_2$, the connected sum is denoted $D_1^\bullet \# D_2^\bullet$. 
It is the diagram obtained by joining $D_1$ to $D_2$ at the basepoints so that the orientations on the joined arcs match up. When both of the basepoints $p_1$ and $p_2$ are adjacent to the unbounded regions of $D_1$ and $D_2$, respectively, then one connects the diagrams in the usual way. If either $p_1$ or $p_2$ is not adjacent to the unbounded region, then we use virtual crossings to connect $D_1$ to $D_2$ at the basepoints without introducing any new flat crossings.

The connected sum operation is easy to visualize for Gauss diagrams. It involves connecting the two circles in an orientation preserving way at the basepoints. For example, see Figure~\ref{fig:longflat2}. Recall that Gauss diagrams are always oriented counterclockwise and do not record virtual crossings. 

The operation of connected sum is not well-defined for flat virtual knots;  it depends on the diagrams, orientations, and choice of basepoints, see Figure~\ref{fig:longflat2}.

This example also shows that the decomposition theorem, a theorem stating every flat virtual knot can be uniquely decomposed as a connected sum of nontrivial prime flat virtual knots, does not exist for flat virtual knots; see \cite{Matveev}.

\begin{figure}[ht]
    \centering
\includegraphics[scale=0.85]{./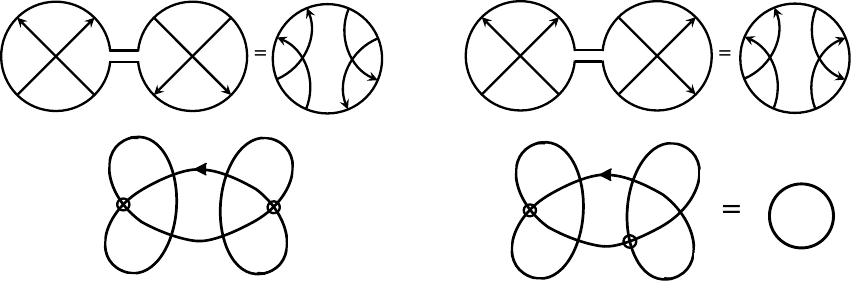}
\caption{The connected sum operation depends on basepoints}
    \label{fig:longflat2}
\end{figure}

\begin{definition}
    A flat virtual knot $\alpha$ is said to be \emph{composite} if it can be represented  by a connected sum diagram $D_1^\bullet \# D_2^\bullet$ such that $\alpha$ is not equivalent to $D_1$ or $D_2$.
A flat virtual knot $\alpha$ is said to be \emph{prime} if it is not a composite flat virtual knot.  
\end{definition}

\begin{definition}
    Let $D_1,D_2$ be two Gauss diagrams for flat virtual knots. The  \emph{set of the permutant diagrams} is defined to be the set of the connected sum Gauss diagrams
$$[D_1 \# D_2] = \{D_1^\bullet \# D_2^\bullet \mid  D_i^\bullet \text{ equals $D_i$ with a choice of basepoint}\}.$$
\label{def:perm}
\end{definition}

Note that the connected sum of two trivial flat virtual knot diagrams can produce a nontrivial flat virtual knot.
 For example,  the flat virtual knots in Figure \ref{fig:fk410} are nontrivial connected sums of two and three trivial diagrams, respectively. Similarly, one can construct a nontrivial flat virtual knot as the connected sum of $n$ trivial diagrams. These flat virtual knots can be shown to be different by computing their based matrices and associated characteristic polynomials, see  \cites{turaev04,chen-thesis}. 
\begin{figure}[ht]
    \centering
\includegraphics[scale=1.20]{./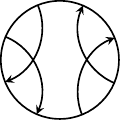}
\hspace{22pt}
\includegraphics[scale=1.20]{./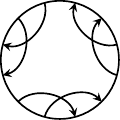}
\caption{Flat virtual knots 4.10 and 6.2064}
    \label{fig:fk410}
\end{figure}

\begin{proposition}
  Let $D_1$ and $D_2$ be two minimal crossing Gauss diagrams. Then any permutant diagram $D\in[D_1\#D_2]$ is a minimal crossing number diagram of its flat virtual knot type.
  \label{prop:min}
\end{proposition}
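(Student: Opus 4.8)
The plan is to show that $D$ admits no crossing-reducing simplification, by exploiting the fact that its Gauss diagram is \emph{split}: the two points where the circles of $D_1$ and $D_2$ are joined cut the circle of $D$ into two arcs $A_1$ and $A_2$, with every arrow of $D_1$ having both endpoints in $A_1$ and every arrow of $D_2$ both endpoints in $A_2$. First I would record the easy bound: the connected sum of Gauss diagrams merely concatenates the arrow sets, so $cr(D)=cr(D_1)+cr(D_2)=cr(\alpha_1)+cr(\alpha_2)$, where $\alpha_i$ is the type of $D_i$. Since $cr(\alpha)\le cr(D)$ automatically, it suffices to prove $cr(\alpha)\ge cr(D)$, and for this I would run the monotone simplification supplied by Theorem~\ref{thm:monotonicity} and argue that it never actually lowers the crossing number.

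The technical core is a \emph{locality lemma}: every FR1, FR2, or FR3 move applicable to a split Gauss diagram is confined to a single side (purely within $A_1$ or purely within $A_2$), and such pure moves preserve the split structure. An FR1 move involves a single arrow, so it is automatically pure. An FR3 move requires three pairwise-linked arrows, and no arrow of $A_1$ is linked with an arrow of $A_2$ (their endpoints do not interleave), so the three arrows must lie on one side. The delicate case is a decreasing FR2 move, which cancels a pair of parallel adjacent arrows $c_1,c_2$; I would argue that it cannot straddle the two sides. Indeed, a cross-summand cancellation would require the two bigon arcs to be free of other endpoints, and since $A_1$ and $A_2$ meet only at the two join points, these free arcs must be precisely the two short arcs through the join points. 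This forces both endpoints of the $A_1$-arrow to be the two neighbors of the basepoint $p_1$ in $D_1$, i.e.\ $c_1$ is a chord with an empty side --- an FR1-reducible kink in $D_1$, contradicting the minimality of $D_1$. Hence a decreasing FR2 cannot cross the split either.

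With the locality lemma in hand, the conclusion is quick. Apply Theorem~\ref{thm:monotonicity} to obtain a non-increasing sequence of moves from $D$ to a minimal diagram $E$, and suppose toward a contradiction that some move decreases the crossing number. Consider the first such move. Every earlier move preserves the crossing number, hence is an FR3 move, hence is pure, so the diagram just before the first decrease is still split, of the form $F_1\#F_2$ where each $F_i$ is obtained from $D_i$ by FR3 moves alone; in particular $F_i$ represents $\alpha_i$ with $cr(F_i)=cr(\alpha_i)$, so each $F_i$ is minimal. By the locality lemma the offending move is pure, say inside $F_1$, and it produces a diagram of $\alpha_1$ with fewer than $cr(\alpha_1)$ crossings --- a contradiction. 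Therefore no move decreases the crossing number, so $cr(E)=cr(D)$, and since $E$ is minimal, $cr(\alpha)=cr(D)=cr(D_1)+cr(D_2)$; thus $D$ is a minimal crossing diagram.

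I expect the main obstacle to be the FR2 step of the locality lemma: ruling out a decreasing bigon that uses one arrow from each summand. The resolution above --- that such a bigon would exhibit an FR1 kink adjacent to a basepoint in an already-minimal summand --- is the key observation, and it is precisely where the minimality of $D_1$ and $D_2$ enters. The remaining pieces (the linking characterization of FR3 and the fact that pure moves preserve the split) are routine once the Gauss-diagram forms of the moves in Figures~\ref{fig:gfrmove12} and \ref{fig:gfrmove3} are written down.
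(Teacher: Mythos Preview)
Your overall strategy---invoke monotonicity (Theorem~\ref{thm:monotonicity}), prove that every applicable move is confined to one summand, and then derive a contradiction from the minimality of $D_1$ and $D_2$---is exactly the paper's approach, and your treatment of the FR1 and decreasing FR2 cases is correct. In particular, your observation that a cross-summand FR2 bigon forces an FR1-kink adjacent to the basepoint in one of the minimal summands is precisely the mechanism the paper uses.

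The gap is in your FR3 step. The assertion that ``an FR3 move requires three pairwise-linked arrows'' is not true: among the Gauss-diagram variants of FR3 in Figure~\ref{fig:gfrmove3}, one side of the move has a configuration in which one pair of chords is \emph{not} linked, so a mixed FR3 with one arrow from $D_1$ and two from $D_2$ is \emph{not} excluded on linking grounds alone. (Indeed, the paper explicitly allows this situation to arise and then rules it out.) The repair is the very argument you already gave for FR2: if a single arrow $c$ comes from $D_1$ and the other two from $D_2$, then two of the three short FR3-arcs must straddle the join points, which forces both endpoints of $c$ to sit at the extreme ends of $A_1$; hence in $D_1$ itself $c$ is a chord with an empty side adjacent to the basepoint, i.e.\ an FR1-reducible kink, contradicting the minimality of $D_1$. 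This is exactly the paper's argument (see Figure~\ref{fig:gfrex}). Note also that this means your ``locality lemma'' must be stated for split diagrams \emph{with minimal summands}; fortunately, a pure FR3 move keeps both summands minimal, so the hypothesis persists along the monotone sequence. With this correction your proof and the paper's coincide.
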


\begin{proof}
    For the sake of contradiction, suppose the crossing number of $D$ can be reduced.
   By Theorem~\ref{thm:monotonicity}, there is a finite sequence of flat Reidemeister moves that can include any of the variants of the FR3 moves in Figure~\ref{fig:gfrmove3} but only the decreasing variants of the FR1 and FR2 moves in Figure~\ref{fig:gfrmove12}.

    Suppose FR3  can be applied on three arrows, with one from $D_1$ and two from $D_2$. 
        This can only happen in the versions a, b, c and d of FR3, in the left-to-right direction, as shown in Figure~\ref{fig:gfrmove3}.
   However, such Gauss diagrams of $D_1$ or $D_2$ are not minimal crossing. For example, as shown in Figure~\ref{fig:gfrex}, if the connected sum is formed at the red dashed line, then applying an FR1 move on $D_1$ reduces the crossing number. The same argument applies to FR3b, FR3c, FR3d as well.
   Therefore, no FR3 moves can be applied with one arrow from $D_1$ and two arrows from $D_2$. The same argument with $D_1,D_2$ switched implies that if $D_1, D_2$ are minimal crossing Gauss diagrams, then any FR3 move must have all three arrows in $D_1$ or in $D_2.$ In particular, the new diagram obtained after FR3 moves will be a connected sum diagram of the form $D' \in [D_1'\#D_2]$ or $[D_1\#D'_2]$, where $D_1', D_2'$ are obtained by applying FR3 to $D_1,D_2,$ respectively. Further, notice that $D_1'$ and $D_2'$ are both still minimal crossing diagrams.
   
   Now consider a decreasing FR1 or FR2 move applied to a diagram $D \in [D_1 \# D_2]$.
  The assumption that $D_1$ and $D_2$  are minimal crossing diagrams rules out the possibility that in a decreasing RM2 move on $D_1\# D_2$, one arrow is from $D_1$ and the other from $D_2$.
   Then the arrows involved in the decreasing FR1 or FR2 move must belong to
   either $D_1$ or $D_2$.  However, this contradicts our assumption that $D_1$ and $D_2$ were minimal crossing diagrams. Therefore, no decreasing FR1 or FR2 moves can be applied to $D$ to reduce its crossing number.
\end{proof}

\begin{figure}[ht]
    \centering
    \includegraphics[scale=1.2]{./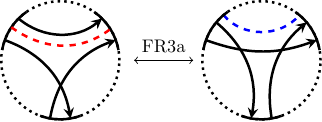}
   \caption{FR3a move on a Gauss diagram}
    \label{fig:gfrex}
\end{figure}

In order to state the next result, we recall the definition of long flat virtual knots.
A \emph{long flat virtual knot diagram} is an oriented $4$-valent planar graph with one component that coincides with the $x$-axis outside of some compact set, where it has flat and virtual crossings;
see Figure~\ref{fig:fk52long}.
Two long flat virtual knot diagrams are said to be \emph{equivalent} if they are related by a finite sequence of flat Reidemeister moves and planar isotopies.
    A \emph{long flat virtual knot} is an equivalence class of long flat virtual knot diagrams.

The \emph{closure} of a long flat virtual knot is the flat virtual knot obtained by replacing the $x$-axis outside the compact set with a half circle. 
If $D$ is a flat virtual knot diagram, then removing a point from $D$ gives a long flat virtual knot diagram $D^\bullet$, and the  closure of $D^\bullet$ is $D$.

\begin{figure}[ht]
    \centering
\includegraphics[scale=2.5]{./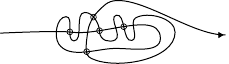}
    \caption{A long flat virtual knot diagram}
    \label{fig:fk52long}
\end{figure}

\begin{lemma}
Let $D$ be a connected sum diagram $D=D_1^\bullet \#D_2^\bullet$, 
where $D_1^\bullet$ and $D_2^\bullet$ are both nontrivial as long flat virtual knots.
    Among all flat Reidemeister moves, only increasing FR2 moves can change $D$ to a non-connected sum diagram.
    \label{lem:composite}
\end{lemma}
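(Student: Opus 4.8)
The plan is to translate everything to Gauss diagrams and to reduce the statement to a single bookkeeping device: for a Gauss diagram $D$, being a connected sum diagram is equivalent to the existence of a \emph{neck}, i.e.\ a pair of points on the circle cutting it into two arcs so that every arrow has both feet on the same arc and each arc carries at least one arrow. Equivalently, writing $G(D)$ for the graph whose vertices are the arrows of $D$ and whose edges join pairs of interleaved arrows, $D$ is a connected sum diagram exactly when $G(D)$ is disconnected; the two factors are the arrows lying on the two sides of the neck, and arrows in different factors never interleave. First I would record this dictionary, together with the elementary remark that the long flat virtual knot underlying a factor is unchanged by any flat Reidemeister move supported in that factor, so that, by nontriviality, each factor keeps crossing number at least one throughout.

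Next I would dispose of the easy moves. The moves VR1--VR3 and FR4 leave the Gauss diagram unchanged, so they preserve everything. A decreasing FR1 or FR2 move deletes one or two arrows; since no arrow of $D$ straddles the neck, such a move is necessarily internal to one factor, and deleting vertices cannot connect $G(D)$. Here nontriviality enters: the affected factor represents the same nontrivial long flat virtual knot after the move, hence still carries a crossing, so the neck survives with both sides nonempty. An increasing FR1 move inserts a single arrow with adjacent feet; such an arrow interleaves with nothing, so it is an isolated vertex of $G(D)$ and cannot merge two components. Thus all of these moves keep $D$ a connected sum diagram.

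The heart of the argument is FR3. The key observation is that an FR3 move alters only the cyclic order of the two feet lying on each of the three triangle strands, so it changes the interleaving relation \emph{only among the three arrows of the triangle}; the interleaving of each triangle arrow with every arrow outside the triangle is untouched, because the relevant feet never leave their own strand-arcs. If all three triangle arrows lie in one factor the move is internal and the neck persists. The remaining, and main, case is the straddling FR3 in which one arrow $a$ lies in one factor and the other two lie in the other (the configurations FR3a--d of Figure~\ref{fig:gfrmove3}). I would first show that $a$ is forced to be an isolated vertex of $G(D)$: its two feet sit at the extreme ends of its side of the neck, so every other arrow on that side has both feet between the feet of $a$ and hence does not interleave $a$, while cross-factor arrows never interleave $a$. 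Consequently $a$ is a trivial curl summand, and its factor splits as $a$ together with a nonempty remainder $R$, nonempty precisely because the factor is nontrivial. After the FR3 move $a$ becomes interleaved with the two triangle arrows and migrates to the other side, but $R$ is untouched and still interleaves nothing across the neck; hence $G$ remains disconnected, with $R$ as one component, and the new diagram is again a connected sum. This is the step I expect to be the main obstacle, and it is exactly where nontriviality is indispensable: had the factor of $a$ been trivial (just the curl $a$), the migration would empty that side and the connected sum structure would be lost.

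Finally I would check that an increasing FR2 move genuinely is the exception. It inserts two new arrows forming a bigon, and these can be placed so that each interleaves arrows on both sides of the neck, creating edges that join the two components of $G(D)$; once $G$ becomes connected no neck remains, so the result need not be a connected sum diagram. Taken together, these cases show that among all flat Reidemeister moves only an increasing FR2 move can turn $D$ into a non-connected sum diagram.
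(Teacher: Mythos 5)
Your overall strategy is the same as the paper's: work on Gauss diagrams, check each move type, and resolve the crucial straddling FR3 case by showing that the lone arrow $a$ from one factor necessarily sits at the two extreme ends of its side of the neck (because each of its feet must be adjacent, across a neck point, to a foot of one of the other two triangle arrows) and simply migrates across, so that a relocated neck survives with the nonempty remainder $R$ on one side. That part of your argument is correct, and is in fact more explicit than the paper's, which just points at Figure~\ref{fig:gfrex} and the blue dashed line.

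There is, however, one genuine flaw: the claim that a decreasing FR2 move ``is necessarily internal to one factor'' because no arrow of $D$ straddles the neck. The two arrows cancelled by a decreasing FR2 need not straddle the neck individually, yet one can come from each factor: put the feet of $a\in D_1$ at the two extreme ends of the $D_1$-arc and the feet of $b\in D_2$ at the two extreme ends of the $D_2$-arc, so that the two adjacencies required for FR2 occur across the two neck points. The paper itself acknowledges exactly this configuration in the proof of Proposition~\ref{prop:min}, where it has to be ruled out using minimality of the summands; here it cannot be ruled out, since no minimality is assumed. The omission is repairable with tools you already have: deleting one arrow from each side leaves the neck intact, and nontriviality of each long factor guarantees each side retains at least one arrow (by the same ``a single end-to-end arrow is a removable kink'' argument you use to show $R\neq\emptyset$). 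Two smaller points. First, the implication ``$G(D)$ disconnected $\Rightarrow$ a neck exists'' is the nontrivial direction of your dictionary and is asserted without proof; in the FR3 case you can bypass it entirely by exhibiting the new neck directly, namely by sliding each neck point past the swapped pair of feet, which is what the paper does. Second, $a$ is an isolated vertex of $G(D)$ but is not literally a ``trivial curl summand'' unless $R$ or the other factor is empty; fortunately only the isolation is used.
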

\begin{proof}
    Observe that if a diagram is a connected sum before applying a decreasing FR1 or FR2 move, then it continues to be a connected sum diagram after performing the FR1 or FR2 move, see Figure~\ref{fig:gfrmove12}.
    Further observe that such a diagram will remain a connected sum under an increasing FR1 move.
    So it suffices to show that no FR3 move can transform a connected sum diagram to a non-connected sum.
    This is obviously true if all three arrows involved in the FR3 move belong to one of $D_1$ or $D_2,$ so we consider only the case where one of the arrows is in $D_1$ and the other two are in $D_2.$
    As shown Figure~\ref{fig:gfrex}, if the connected sum is realized along the red dashed line for the diagram on the left, then the blue dashed line realizes the connected sum for the diagram on the right. This is the argument for the variant FR3a, and the argument for the other variants FR3 moves is similar.

\end{proof}

\begin{theorem}
  Any minimal crossing diagram  of a composite flat virtual knot  is a connected sum diagram.
    \label{composite2}
\end{theorem}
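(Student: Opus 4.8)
The plan is to reduce Theorem~\ref{composite2} to two statements: (i) the composite knot $\alpha$ possesses \emph{some} minimal crossing diagram that is a nontrivial connected sum, and (ii) being a connected sum diagram is an invariant of the FR3 moves that relate minimal crossing diagrams. Granting these, Theorem~\ref{thm:monotonicity} supplies, for an arbitrary minimal crossing diagram $D$ of $\alpha$, a finite sequence of FR3 moves from the diagram of (i) to $D$, and propagating the connected sum structure along this sequence via (ii) shows that $D$ itself is a connected sum diagram. First I would record the standing observation that, in any connected sum representative $\alpha=D_1^\bullet\#D_2^\bullet$ witnessing that $\alpha$ is composite, both long flat virtual knots $D_1^\bullet$ and $D_2^\bullet$ are nontrivial: if one were the trivial long knot, the closure of the connected sum would be the other factor, contradicting $\alpha\neq D_1,D_2$.

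For (i), I would start from such a connected sum representative and run the monotonic reduction of Theorem~\ref{thm:monotonicity}, which uses only FR3 moves together with non-increasing FR1 and FR2 moves. By Lemma~\ref{lem:composite} none of these moves can destroy the connected sum structure (only an \emph{increasing} FR2 could), so the terminal minimal crossing diagram $\hat D$ is again a connected sum. To see that $\hat D$ is a \emph{nontrivial} connected sum, I would use the mechanism already exploited in the proof of Proposition~\ref{prop:min}: a crossing-decreasing FR1 or FR2 move in a connected sum diagram cannot involve arrows from both factors, because in the connected sum Gauss diagram the arrows of the two factors occupy disjoint arcs and a cancelling configuration cannot straddle the two connect points. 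Consequently the reduction lowers the crossing number of each factor independently, and since a nontrivial long flat knot cannot be reduced to zero crossings, both factors of $\hat D$ remain nontrivial. Taking these reduced factors to be minimal crossing Gauss diagrams, Proposition~\ref{prop:min} then certifies that $\hat D$ realizes $cr(\alpha)$.

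For (ii), let $\hat D=\hat D_1^\bullet\#\hat D_2^\bullet$ be the minimal connected sum diagram from (i), with $\hat D_1,\hat D_2$ minimal and nontrivial, and let $\hat D=E_0\to E_1\to\cdots\to E_k=D$ be the FR3 sequence from Theorem~\ref{thm:monotonicity}. I would argue by induction that each $E_i$ is a connected sum of two minimal, nontrivial factors. The inductive step is exactly the dichotomy isolated in the proof of Proposition~\ref{prop:min}: since the current factors are minimal crossing, an FR3 move (in any of the variants of Figure~\ref{fig:gfrmove3}) cannot have one arrow in one factor and two in the other, for that would exhibit a crossing-reducing configuration inside a factor. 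Hence all three arrows of the FR3 move lie in a single factor; that factor is replaced by an FR3-equivalent diagram, which is again minimal and of the same nontrivial knot type, while the other factor is untouched. Thus $E_{i+1}$ is again a connected sum of minimal nontrivial factors, and in particular $D=E_k$ is a connected sum diagram.

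The hardest part will be establishing (i), and specifically the two places where the bookkeeping is most delicate: first, ensuring that the independent reduction of the two factors genuinely yields a diagram representing $\alpha$, so that basepoints are tracked correctly through the reduction and the minimal long factors one obtains also serve as the minimal crossing Gauss diagrams demanded by Proposition~\ref{prop:min}; and second, confirming that neither factor can collapse to the trivial long knot during the reduction. Both rest on the single mechanism that crossing-reducing moves never mix the two factors, so the argument stands or falls on a careful verification, case by case over the FR1 and FR2 variants of Figure~\ref{fig:gfrmove12}, that no cancelling pair can be split across the connect points of a connected sum Gauss diagram.
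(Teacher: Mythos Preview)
Your plan is in the same spirit as the paper's proof but is substantially more elaborate, and the mechanism you identify as decisive does not actually hold. The paper proves the theorem in one stroke: starting from a connected sum representative $D=D_1^\bullet\#D_2^\bullet$ with both long factors nontrivial, Theorem~\ref{thm:monotonicity} (both clauses combined) produces a sequence of FR3 and decreasing FR1/FR2 moves from $D$ to \emph{any} prescribed minimal diagram $D'$, and Lemma~\ref{lem:composite} says every such move keeps the diagram a connected sum. There is no two-phase split, no tracking of minimality of the factors, and no appeal to Proposition~\ref{prop:min}.

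The gap in your version is the assertion that a decreasing FR2 can never involve one arrow from each factor. That is precisely what Proposition~\ref{prop:min} derives \emph{from minimality of the factors}, and it fails without that hypothesis. For the non-crossing FR2 variant in Figure~\ref{fig:gfrmove12} the outer chord can sit with both endpoints at the extremities of the $D_1$-arc while the inner chord sits with both endpoints at the extremities of the $D_2$-arc, the two connect points lying between the adjacent endpoint pairs; this is a legitimate FR2 configuration with one arrow in each factor. It is ruled out in Proposition~\ref{prop:min} only because the outer chord would then be FR1-removable in the closed $D_1$, contradicting minimality there, but nothing forbids it during your phase~(i). Hence the case-by-case verification on which you say the argument ``stands or falls'' would fail, and with it the claim that the monotone reduction treats the two summands independently and preserves their long knot types. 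The remedy is simply to invoke Lemma~\ref{lem:composite} throughout: it already handles mixed FR3 moves by shifting the split point and covers all decreasing moves, and doing so collapses your argument to the paper's one-line proof.
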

\begin{proof}
    A composite flat virtual knot has a Gauss diagram   $D=D_1^\bullet \#D_2^\bullet$, where $D_1^\bullet$ and $D_2^\bullet$ are two Gauss diagrams with a choice of basepoint on them, and removing the basepoint from either of them produces a nontrivial long flat virtual knot.
Assume $D'$ is a minimal crossing diagram that is flat equivalent to $D$.
    By Theorem~\ref{thm:monotonicity}, $D'$ is obtained from $D$ by finitely many FR3 moves and decreasing FR1 and FR2 moves. Then by Lemma~\ref{lem:composite}, $D'$ is a connected sum diagram.
\end{proof}

Note that Theorem~\ref{composite2} does not require that the two summands $D_1$ and $D_2$  be minimal crossing diagrams.

Primality is a flat virtual knot invariant, and Theorem~\ref{composite2} implies that a flat virtual knot is prime if and only if any minimal crossing representative is a prime diagram. Combined with the monotonicity result (Theorem~\ref{thm:monotonicity}), this leads to a simple algorithm to determine whether a given flat virtual knot is prime or composite. Since the representatives in the recent tabulation \cite{flatknotinfo} of flat virtual knots up to 8 crossings are all minimal crossing, one can check primeness of the flat virtual knots from \cite{flatknotinfo} by inspection. However, in classification, one of the most difficult problems is to distinguish composite flat virtual knots that are permutant (Definition~\ref{def:perm}). The reason is that most flat knot invariants are equivalent on permutant pairs. What is needed are new invariants that are capable of distinguishing inequivalent permutant pairs.

The next result is a direct consequence of Theorem~\ref{composite2}; and it shows that the crossing number is super-additive for connected sums of flat virtual knots.

\begin{corollary}
Let $D,D_1,D_2$ be Gauss diagrams of the flat virtual knots $\alpha,\alpha_1,\alpha_2$, respectively. If $D$ is a connected sum of $D_1$ and $D_2$,
    then the crossing number satisfies
    \begin{equation} \label{eqn-crossing_number} 
    cr(\alpha)\ge cr(\alpha_1)+cr(\alpha_2)
    \end{equation} 
    \label{cor:superadd}
\end{corollary}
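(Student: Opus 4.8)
The plan is to reduce the super-additivity inequality to the structural result already in hand, namely Theorem~\ref{composite2}. The key observation is that the inequality is only interesting when $\alpha$ is genuinely composite; if $\alpha$ equals one of the summands, the statement degenerates and can be handled separately. So first I would dispose of the degenerate cases: if $\alpha$ is equivalent to $D_1$ or $D_2$, then one summand is trivial (has crossing number zero) as a flat virtual knot, and the inequality reduces to $cr(\alpha) \ge cr(\alpha_i)$, which holds because $\alpha$ and $\alpha_i$ are equal. The substantive case is when $\alpha$ is composite in the sense of the definition above.

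Next I would invoke Theorem~\ref{composite2}. Let $D'$ be a minimal crossing diagram for $\alpha$, so that $cr(D') = cr(\alpha)$. By Theorem~\ref{composite2}, $D'$ is a connected sum diagram, say $D' = E_1^\bullet \# E_2^\bullet$, where $E_1$ and $E_2$ are diagrams for some flat virtual knots $\beta_1, \beta_2$ with $\alpha = \beta_1 \# \beta_2$. The crossing number is additive for the number of flat crossings \emph{in a given connected sum diagram}, since joining two diagrams at basepoints using only virtual crossings introduces no new flat crossings; hence $cr(D') = cr(E_1) + cr(E_2)$.

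The final step is to compare $cr(E_i)$ with $cr(\beta_i)$. By definition of the crossing number as a minimum over all diagrams, $cr(E_i) \ge cr(\beta_i)$ for each $i$. Combining, I would obtain
\begin{equation*}
cr(\alpha) = cr(D') = cr(E_1) + cr(E_2) \ge cr(\beta_1) + cr(\beta_2).
\end{equation*}
What remains is to identify $\beta_1, \beta_2$ with $\alpha_1, \alpha_2$, and here is where I expect the main subtlety to lie. The decomposition $\alpha = E_1^\bullet \# E_2^\bullet$ produced by Theorem~\ref{composite2} need not visibly coincide with the given presentation $D = D_1^\bullet \# D_2^\bullet$; the whole point of the paper is that connected sum depends on diagrams and basepoints. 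The correct reading, which I would make explicit, is that Corollary~\ref{cor:superadd} is a statement about the particular knots $\alpha_1, \alpha_2$ whose connected sum (in some presentation) yields $\alpha$, so that $\beta_i = \alpha_i$ holds by construction once one tracks the summand factors through the monotone reduction supplied by Theorem~\ref{thm:monotonicity} and Lemma~\ref{lem:composite}.

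Thus the hard part is not the arithmetic but the bookkeeping: one must verify that the summands of the minimal connected sum diagram represent the same flat virtual knot types $\alpha_1$ and $\alpha_2$ that we started with. The monotone reduction from $D$ to $D'$ passes only through FR3 moves and decreasing FR1, FR2 moves, and Lemma~\ref{lem:composite} guarantees these preserve the connected sum structure while acting within a single summand at a time; this is precisely what lets me conclude that the summand types are unchanged, so that $cr(E_i) \ge cr(\alpha_i)$ and the inequality~\eqref{eqn-crossing_number} follows.
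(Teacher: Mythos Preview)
Your proposal is correct and follows essentially the same route as the paper: handle a degenerate case first, then invoke the \emph{proof} (not merely the statement) of Theorem~\ref{composite2}, tracking the summand types through the monotone reduction of Theorem~\ref{thm:monotonicity} via Lemma~\ref{lem:composite}, to conclude that a minimal diagram $D'$ of $\alpha$ splits as $D'_1\#D'_2$ with $D'_i$ still representing $\alpha_i$, whence $cr(\alpha)=cr(D'_1)+cr(D'_2)\ge cr(\alpha_1)+cr(\alpha_2)$. One minor point: your degenerate case asserts that $\alpha=\alpha_i$ forces the other summand to be trivial, but no cancellation theorem is available here to justify that implication independently; the paper sidesteps this by splitting instead on whether some $cr(\alpha_i)=0$, and in any event the tracking argument you outline already covers all cases once the long summands $D_i^\bullet$ are nontrivial.
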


\begin{proof}
    The inequality holds if  $cr(\alpha_i)=0$ for $i=1$ or $i=2$.

    Assume $\alpha$ is a composite knot and $cr(\alpha_i)>0$ for $i=1,2$. 
    Then by the argument of Theorem~\ref{composite2}, a minimal crossing diagram $D'$ of $\alpha$ is a connected sum  
    of $D'_1$ and $D'_2$, where $D'_1, D'_2$ are Gauss diagrams of $\alpha_1, \alpha_2$, respectively.
Then we have
$$cr(\alpha)=cr(D')= cr(D'_1)+cr(D'_2)\ge cr(\alpha_1)+cr(\alpha_2).$$
\end{proof}

There are examples showing that the inequality in Equation \eqref{eqn-crossing_number} is sometimes strict. For example, Figure~\ref{fig:longflat2} shows two connected sums of Gauss diagrams and their corresponding flat virtual knots. Both are connected sum diagrams of the flat virtual unknot, but the one on the right is trivial and the one on the left is a nontrivial flat virtual knot with four crossings.
Note that Proposition~\ref{prop:min} implies that   equality holds in Equation \eqref{eqn-crossing_number} when the summands $D_1$ and $D_2$ are minimal crossing diagrams.

\section*{Acknowledgements}
The main results in this paper are derived from the author's Ph.D. thesis \cite{chen-thesis}, written under the supervision of Hans Boden at McMaster University.
\begin{bibdiv}
\begin{biblist}

\bib{cahn}{article}{
    author={Cahn, Patricia},
   title={A generalization of Turaev's virtual string cobracket and
   self-intersections of virtual strings},
   journal={Commun. Contemp. Math.},
   volume={19},
   date={2017},
   number={4},
   pages={1650053, 37pp},
}

\bib{Carter}{article}{
    AUTHOR = {Carter, J. Scott},
     TITLE = {Classifying immersed curves},
   JOURNAL = {Proc. Amer. Math. Soc.},
    VOLUME = {111},
      YEAR = {1991},
    NUMBER = {1},
     PAGES = {281--287},
}

\bib{chen-thesis}{thesis}{
  title={Flat Knots and Invariants},
  author={Chen, Jie},
  year={2023},
    note = {\href{http://hdl.handle.net/11375/29201}{Ph.D. Thesis}, McMaster University},
  pages= {105},
  url={http://hdl.handle.net/11375/29201},
}
\bib{Carter-Kamada-Saito}{article}{
    author={Carter, J. Scott},
    author={Kamada, Seiki},
    author={Saito, Masahico},
   title={Stable equivalence of knots on surfaces and virtual knot cobordisms},
   journal={J. Knot Theory Ramifications},
   volume={11},
   date={2002},
   number={3},
   pages={311--322},
}

\bib{flatknotinfo}{webpage}{
    label={FKI},
    author = {Chen, Jie},
    title = {FlatKnotInfo: Table of Flat Knot Invariants},
    url = {https://flatknotinfo.mcmaster.ca},
}

\bib{david}{article}{
   author={Freund, David},
   title={Complexity of virtual multistrings},
   journal={Commun. Contemp. Math.},
   volume={24},
   date={2022},
   number={6},
   pages={2150066, 10pp},
}

\bib{gibson}{article}{
   author={Gibson, Andrew},
   title={On tabulating virtual strings},
   journal={Acta Math. Vietnam.},
   volume={33},
   date={2008},
   number={3},
   pages={493--518},
}

\bib{hassscott}{article}{
    author={Hass, Joel},
    author={Scott, Peter},
   title={Shortening curves on surfaces},
   journal={Topology},
   volume={33},
   date={1994},
   number={1},
   pages={25--43},
}

\bib{Henrich-Kauffman}{article}{
    AUTHOR = {Henrich, Allison}, 
    author={Kauffman, Louis H.},
     TITLE = {Unknotting unknots},
   JOURNAL = {Amer. Math. Monthly},
    VOLUME = {121},
      YEAR = {2014},
    NUMBER = {5},
     PAGES = {379--390},
      ISSN = {0002-9890,1930-0972},
}

\bib{MR2008880}{article}{
    author={Kadokami, Teruhisa},
   title={Detecting non-triviality of virtual links},
   journal={J. Knot Theory Ramifications},
   volume={12},
   date={2003},
   number={6},
   pages={781--803},
}

\bib{kauffman99}{article}{
   author={Kauffman, Louis H.},
   title={Virtual knot theory},
   journal={European J. Combin.},
   volume={20},
   date={1999},
   number={7},
   pages={663--690},
}

\bib{manturovbook}{book}{
    author={Manturov, Vassily O.},
    author={Ilyutko, Denis P.},
   title={Virtual knots. The state of the art},
   series={Series on Knots and Everything},
   volume={51},
   note={Translated from the 2010 Russian original;
   With a preface by Louis H. Kauffman},
   publisher={World Scientific Publishing Co. Pte. Ltd., Hackensack, NJ},
   date={2013},
   pages={xxvi+521},
}

\bib{Matveev}{article}{
    AUTHOR = {Matveev, Sergei V.},
     TITLE = {Roots and decompositions of three-dimensional topological
              objects},
   JOURNAL = {Uspekhi Mat. Nauk},
    VOLUME = {67},
      YEAR = {2012},
    NUMBER = {3(405)},
     PAGES = {63--114},
}

\bib{turaev04}{article}{
    author={Turaev, Vladimir},
   title={Virtual strings},
   journal={Ann. Inst. Fourier (Grenoble)},
   volume={54},
   date={2004},
   number={7},
   pages={2455--2525},
}

\end{biblist}
\end{bibdiv}

\Addresses
\end{document}